\newtheorem{thm}{Theorem}[section]
\newtheorem{cor}[thm]{Corollary}
\newtheorem{prop}[thm]{Proposition}
\newtheorem{prob}[thm]{Problem}
\newtheorem{lem}[thm]{Lemma}
\theoremstyle{remark}
\theoremstyle{definition}
\newtheorem{defn}[thm]{Definition}
\newcommand{\Alt}{{\raise 2pt\hbox{$\scriptstyle\bigwedge$}}}
\newcommand{\go}{\rightarrow}
\newcommand{\e}{\epsilon}
\begin{document}
\title{Probabilistically nilpotent groups}

\author{Aner Shalev}
\email{shalev@math.huji.ac.il}
\address{Einstein Institute of Mathematics\\
    Hebrew University \\
    Givat Ram, Jerusalem 91904\\
    Israel}

\thanks{2010 {\it Mathematics Subject Classification:} Primary 20E26; Secondary 20P05}

\thanks{The author was partially supported by ERC advanced grant 247034, BSF grant 2008194,
ISF grant 1117/13 and the Vinik Chair of mathematics which he holds.}

\begin{abstract}
We show that, for a finitely generated residually finite group $\Gamma$,
the word $[x_1, \ldots, x_k]$ is a probabilistic identity of $\Gamma$ if and
only if $\Gamma$ is virtually nilpotent of class less than $k$.

Related results, generalizations and problems are also discussed.

\end{abstract}

\maketitle


\section{Introduction}

A well known result of Peter Neumann \cite{N} shows that a finite group $G$ in which
the probability that two random elements commute is at least $\e > 0$
is bounded-by-abelian-by-bounded; this means that there are normal subgroups
$N, K$ of $G$ such that $K \le N$, $N/K$ is abelian, and both $|G/N|$ and $|K|$
are bounded above by some function of $\e$.

The probability that two elements commute received considerable attention
over the years, see for instance  \cite{G}, \cite{J}, \cite{LP}, \cite{GR}, \cite{GS}, \cite{H}, \cite{NY}, \cite{E}.
However, the natural extension to longer commutators and the probability of them being $1$ remained
unexplored.
Here we shed some light on this problem, providing some results, characterizations
and directions for further investigations.

Neumann's result, as well as a similar result of Mann on groups with many involutions \cite{M1},
can be viewed in the wider context of the theory of word maps
(see for instance the survey paper \cite{S} and the references therein) and the notion of
probabilistic identities.

A word $w=w(x_1, \ldots , x_k)$ is an element of the free group $F_k$ on $x_1, \ldots , x_k$.
Given a group $G$, the word $w$ induces a word map $w_G: G^k \to G$ induced
by substitution. We denote the image of this word map by $w(G)$.

If $G$ is finite then $w$ induces a probability distribution
$P_{G,w}$ on $G$, given by
\[
P_{G,w}(g) = |w_G^{-1}(g)|/|G|^k.
\]
A similar distribution is defined on profinite groups $G$, using their normalized Haar measure.

This distribution has been studied extensively in recent years, with particular
emphasis on the case where $G$ is a finite simple group -- see \cite{DPSS}, \cite{GS},
\cite{LS1}, \cite{LS2}, \cite{LS3}, \cite{LS4}, \cite{B}.
Here we focus on general finite groups and residually finite groups,
and the proofs of our results do not use the Classification of finite simple group.

Let $\Gamma $ be a residually finite group.
Recall that a word $w \ne 1$ is said to be a {\em probabilistic identity} of
$\Gamma $ if there exists $\e > 0$ such that for each finite quotient $H = \Gamma/\Delta$
of $\Gamma $ we have $P_{H,w}(1) \ge \e$.
This amounts to saying that, in the profinite completion $G = \widehat{\Gamma }$ of $\Gamma $,
we have $P_{G,w}(1) > 0$. We shall also be interested in finite and profinite groups
$G$ in which $P_{G,w}(g) \ge \e > 0$ for some element $g \in G$.

While Neumann's result deals with the commutator word $[x_1,x_2]$ of length two, here we consider
commutator words of arbitrary length. Define inductively $w_1 = x_1$ and $w_{k+1} = [w_k,x_{k+1}]$.
Thus $w_k$ is the left normed commutator $[x_1, \ldots , x_k]$.

Our main result characterizes finitely generated residually finite groups in which
such a word is a probabilistic identity. Clearly, if $\Gamma$ has a nilpotent normal
subgroup of finite index $m$ and class $< k$, and $G = \widehat{\Gamma }$, then
$P_{G,w_k}(1) \ge m^{-k} > 0$, so $w_k$ is a probabilistic identity of $\Gamma$.
It turns out that the converse is also true.

\begin{thm}
\label{main}
Let $\Gamma$ be a finitely generated residually finite group, and let $k$ be a positive integer.
Then the word $[x_1, \ldots , x_k]$ is a probabilistic identity of $\Gamma$ if and only if
$\Gamma$ has a finite index normal subgroup $\Delta$ which is nilpotent of class less than $k$.
\end{thm}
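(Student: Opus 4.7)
The ``if'' direction is contained in the paragraph preceding the theorem; I outline the converse.

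The plan reduces the theorem to a quantitative finite-group statement and combines it with residual finiteness. I aim to prove that for each $\e>0$ and $k\ge 1$ there is a constant $c=c(\e,k)$ such that every finite group $H$ with $P_{H,w_k}(1)\ge\e$ contains a normal subgroup $N\trianglelefteq H$ of index at most $c$ which is nilpotent of class less than $k$. Granted this, set $\Delta=\bigcap\{L\trianglelefteq\Gamma:[\Gamma:L]\le c\}$; finite generation of $\Gamma$ implies that only finitely many such $L$ exist, so $\Delta$ has finite index in $\Gamma$. In any finite quotient $\Gamma/M$, the image of $\Delta$ is contained in every normal subgroup of index $\le c$, hence in the class-$(<k)$ nilpotent subgroup supplied by the finite-group statement; residual finiteness of $\Gamma$ then forces $\Delta$ itself to be nilpotent of class less than $k$.

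The case $k=2$ of the finite-group statement follows from Neumann's theorem \cite{N} (giving normal subgroups $K\le N_0\trianglelefteq H$ with $[H:N_0]$ and $|K|$ bounded and $N_0/K$ abelian, hence $|\gamma_2(N_0)|\le |K|$ bounded) combined with the quantitative Schur--Baer--Neumann correspondence (converting a bound on $|G'|$ into a bound on $[G:Z(G)]$); the characteristic subgroup $Z(N_0)$ is then abelian, normal in $H$, and of bounded index. For general $k\ge 3$, the plan is to find a bounded-index normal $N\trianglelefteq H$ with $|\gamma_k(N)|$ bounded and then invoke the higher Schur--Baer theorem of P.~Hall ($|\gamma_k(G)|$ bounded $\Rightarrow [G:Z_{k-1}(G)]$ bounded) to replace $N$ by its characteristic subgroup $Z_{k-1}(N)$, nilpotent of class $\le k-1$. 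To extract the bound on $|\gamma_k(N)|$, I would first compute, via $w_k=[w_{k-1},x_k]$ and averaging over $x_k$,
\[
P_{H,w_k}(1)\;=\;E_{(g_1,\ldots,g_{k-1})}\bigl[\alpha\bigl(w_{k-1}(g_1,\ldots,g_{k-1})\bigr)\bigr],\qquad \alpha(y):=|C_H(y)|/|H|,
\]
so that Markov's inequality yields that $w_{k-1}$ takes values in the conjugation-invariant set $B:=\{y\in H:|y^H|\le 2/\e\}$ of elements with small conjugacy class in $H$, with probability at least $\e/2$.

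The main obstacle is converting this partial BFC-information into a bound on $|\gamma_k(N)|$ for a bounded-index normal subgroup $N$. I would attempt it via a quantitative form of B.~H.~Neumann's BFC-theorem (a group normally generated by elements with conjugacy class of size at most $c$ has commutator subgroup of bounded size) applied to the normal subgroup generated by the $w_{k-1}$-values lying in $B$, combined with commutator identities expressing generators of $\gamma_k(H)$ as commutators $[w_{k-1}(g),h]$ with $h\in H$. The delicate point is that $w_{k-1}\in B$ only holds with positive probability and not for all arguments, so one must also restrict to a bounded-index normal subgroup on which $w_{k-1}$ lands in $B$ with probability close to $1$ before the BFC-structure can be applied uniformly.
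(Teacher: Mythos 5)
Your reduction to a finite-group statement plus the intersection-of-bounded-index-normal-subgroups argument is the right skeleton (and your averaging step over $x_k$ is essentially the paper's Lemma \ref{lem1}), but the quantitative statement you reduce to is false as you state it, and the tools you invoke to prove it do not exist in the form you need. You claim a constant $c=c(\e,k)$, independent of the number of generators, such that $P_{H,w_k}(1)\ge\e$ forces a normal subgroup of index $\le c$ that is nilpotent of class $<k$. Already for $k=2$ this fails: an extraspecial $2$-group $H$ of order $2^{2m+1}$ has $P_{H,[x_1,x_2]}(1)=(2^{2m}+1)/2^{2m+1}>1/2$, yet its abelian subgroups have index at least $2^{m}$, so no bound independent of $m$ exists. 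The same example kills the ``quantitative Schur--Baer--Neumann correspondence'' you invoke: $|H'|=2$ while $[H:Z(H)]=2^{2m}$ is unbounded, i.e.\ the converse of Schur's theorem is simply not true without a bound on the number of generators. Moreover, P.~Hall's theorem converts a bound on $|\gamma_k(G)|$ into a bound on $[G:Z_{2(k-1)}(G)]$, not on $[G:Z_{k-1}(G)]$, so even if your BFC step produced a bounded-index $N$ with $|\gamma_k(N)|$ bounded, you would only obtain a bounded-index subgroup of class at most $2(k-1)$, weaker than the class $<k$ the theorem demands.

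The paper avoids all of this by proving a differently shaped finite statement: with $n=\lfloor k/\e\rfloor$, the subgroup $N=G(S_n)$ (the intersection of kernels of all homomorphisms $G\to S_n$) is nilpotent of class $<k$, with no claim that its index is bounded by a function of $\e,k$ alone. The induction works because, with probability $>\e-\delta$, the value $w_{k-1}(g_1,\ldots,g_{k-1})$ has centralizer of index $<1/\delta$, hence its centralizer contains $G(S_n)$; passing to $G/C_G(N)$ and inducting gives $\gamma_{k-1}(N)\le C_G(N)$, whence $\gamma_k(N)=1$. The index bound is then recovered only for $d$-generated $G$, via B.~H.~Neumann's theorem that the free $d$-generated group in the variety generated by $S_n$ has order at most $n!^{\,n!^{d}}$, giving $|G/N|\le n!^{\,n!^{d}}$. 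This dependence on $d$ is exactly what your proposal is missing, and it is harmless for Theorem \ref{main} because every finite quotient of the $d$-generated group $\Gamma$ is again $d$-generated; with the constant corrected to $c(d,k,\e)$, your final step (intersect all normal subgroups of index $\le c$ and use residual finiteness to conclude $\gamma_k(\Delta)=1$) goes through.
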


In fact our proof shows more, namely: if $\Gamma$ (or its profinite completion) is generated by $d$ elements,
and for some fixed $\e > 0$ and every finite quotient $H$ of $\Gamma$ there exists $h \in H$ such that
$P_{H,w_k}(h) \ge \e$, then the index $|\Gamma:\Delta|$ of the nilpotent subgroup $\Delta$ above divides $n!^{n!^d}$, where $n = \lfloor k/\e \rfloor$.

Since a coset identity is a probabilistic identity we immediately obtain the following.

\begin{cor} Let $\Gamma$ be a finitely generated residually finite group, and let $k$ be a positive integer.
Suppose there exist a finite index subgroup $\Delta$ of $\Gamma$, and elements
$g_1, \ldots , g_k \in \Gamma$ satisfying $[g_1\Delta, \ldots , g_k\Delta] = 1$.
Then there exists a finite index subgroup $\Delta_0$ of $\Gamma$ such that $\gamma_k(\Delta_0) = 1$.
\end{cor}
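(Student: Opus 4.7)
The plan is to deduce the corollary directly from Theorem \ref{main} by verifying that the coset identity hypothesis forces $w_k = [x_1, \ldots, x_k]$ to be a probabilistic identity of $\Gamma$ with an explicit lower bound on the probability. Once this reduction is in place, the main theorem delivers a normal finite index subgroup that is nilpotent of class less than $k$, and this is precisely the conclusion $\gamma_k(\Delta_0) = 1$.

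First I would translate the hypothesis $[g_1\Delta, \ldots, g_k\Delta] = 1$ to an arbitrary finite quotient $H = \Gamma/N$. Write $\bar\Delta$, $\bar g_i$ for the images in $H$; then for every choice of $\delta_1, \ldots, \delta_k \in \bar\Delta$ one has $[\bar g_1 \delta_1, \ldots, \bar g_k \delta_k] = 1$ in $H$, because this is the image of a relation already holding in $\Gamma$. The uniform probability that a random element of $H$ lies in the coset $\bar g_i \bar\Delta$ equals $|\bar\Delta|/|H| = [H:\bar\Delta]^{-1} \ge [\Gamma:\Delta]^{-1}$, and the $k$ choices are independent, so
\[
P_{H,w_k}(1) \ge [\Gamma:\Delta]^{-k}.
\]
Since the bound is uniform over all finite quotients $H$, setting $\e = [\Gamma:\Delta]^{-k}$ shows that $w_k$ is a probabilistic identity of $\Gamma$ in the sense defined in the introduction.

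Now Theorem \ref{main} applies and produces a normal subgroup $\Delta_0$ of $\Gamma$ of finite index which is nilpotent of class strictly less than $k$. By definition of nilpotency class, $\gamma_k(\Delta_0) = 1$, which is the desired conclusion.

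There is no real obstacle once one has Theorem \ref{main} in hand; the only content is to notice that a coset identity automatically yields a lower bound on $P_{H,w_k}(1)$ that is independent of the quotient $H$. This explains the parenthetical remark preceding the corollary.
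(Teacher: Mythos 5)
Your proof is correct and follows exactly the paper's route: the paper derives this corollary in one line from Theorem \ref{main} via the observation that a coset identity is a probabilistic identity, and you have simply spelled out the uniform bound $P_{H,w_k}(1) \ge [\Gamma:\Delta]^{-k}$ that justifies it.
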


Here and throughout this paper $\gamma_k(G)$ denotes the $k$th term of the lower
central series of a group $G$.
Theorem \ref{main} follows from an effective result on finite groups, which is of
independent interest. To state it we need some notation.

For groups $G$ and $H$ we define
\[
G(H) = \cap_{\phi:G \go H}  \ker \phi,
\]
namely, the intersection of all kernels of homomorphisms from $G$ to $H$.
Then $G(H) \lhd G$, $G/G(H) \le H^l$ for some $l$ (which is finite if $G$ is finite).
Moreover, $G/G(H)$ satisfies all the identities of $H$, namely it lies in the variety generated
by $H$.

The case $H = S_n$, the symmetric group of degree $n$, will play a role below.
For a positive integer $n$ define
\[
L(n) = lcm (1, 2, \ldots , n),
\]
where $lcm$ stands for the least common multiple.
Clearly, $L(n)$ is the exponent of $S_n$. It is well known (and easy to verify using the
Prime Number Theorem) that $L(n) = e^{(1+o(1))n}$. There are considerably shorter identities for $S_n$,
but the length of its shortest identity is still unknown.

\begin{thm}
\label{finite}
Fix a positive integer $k$ and a real number $\e > 0$.

Let $w_k = [x_1, \ldots, x_k]$. Let $G$ be a finite group and suppose that
for some $g \in G$ we have $P_{G,w_k}(g) \ge \e$. Set $n = \lfloor k/\e \rfloor$
and let $N = G(S_n)$. Then $N$ is nilpotent of class less than $k$.

Furthermore, $G/N \le S_n^l$ for some $l$, and it satisfies all the identities
of $S_n$. In particular, $G/N$ has exponent dividing $L(\lfloor k/\e \rfloor)$.
\end{thm}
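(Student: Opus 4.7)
The plan is to prove Theorem~\ref{finite} by induction on $k$. In the base case $k=1$, the hypothesis $P_{G,w_1}(g)=1/|G|\geq\epsilon$ forces $|G|\leq\lfloor 1/\epsilon\rfloor=n$, so the regular representation embeds $G$ into $S_n$, giving $N=G(S_n)=1$, trivially nilpotent of class less than $1$.

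For the inductive step with $k\geq 2$, I condition on $y=w_{k-1}(x_1,\ldots,x_{k-1})$. For each fixed $y$, the equation $[y,x_k]=g$ either has no solutions (when $yg$ is not $G$-conjugate to $y$) or else its solution set is a coset of $C_G(y)$; in either case $P_{x_k}([y,x_k]=g)\leq|C_G(y)|/|G|=1/|y^G|$. Splitting the expectation over $y$ according to whether $y$ lies in $T:=\{y\in G:|y^G|\leq n\}$ or not, I obtain
\[
\epsilon\leq P_{G,w_{k-1}}(T)+\frac{1-P_{G,w_{k-1}}(T)}{n},
\]
whence $P_{G,w_{k-1}}(T)\geq\epsilon':=(\epsilon n-1)/(n-1)$. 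The next step is structural: for each $y\in T$, the conjugation action of $G$ on the orbit $y^G$ (of size at most $n$) yields a homomorphism $G\to\mathrm{Sym}(y^G)\hookrightarrow S_n$ whose kernel is contained in $C_G(y)$; since $N=G(S_n)$ is contained in the kernel of every homomorphism $G\to S_n$, we conclude $T\subseteq C_G(N)$. Passing to $\bar G:=G/C_G(N)$ therefore yields $P_{\bar G,w_{k-1}}(\bar 1)\geq\epsilon'$, because word maps are compatible with quotients.

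I now apply the inductive hypothesis to $\bar G$ with parameters $\epsilon'$ and $n':=\lfloor(k-1)/\epsilon'\rfloor$: this gives $\bar G(S_{n'})$ nilpotent of class less than $k-1$. The crucial arithmetic check is $n'\leq n$; after substituting the value of $\epsilon'$ this reduces to $\epsilon(n+1)>k-(k-2)/n$, which follows from $\epsilon(n+1)>k$ (itself a consequence of $n=\lfloor k/\epsilon\rfloor$) whenever $k\geq 2$. With $n'\leq n$ in hand, every homomorphism $\phi:\bar G\to S_{n'}$ composes with the quotient map $G\to\bar G$ and with the inclusion $S_{n'}\hookrightarrow S_n$ to produce a homomorphism $G\to S_n$ whose kernel contains $N$; so the image of $N$ in $\bar G$, which is canonically $N/(N\cap C_G(N))=N/Z(N)$, is contained in $\bar G(S_{n'})$. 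Hence $N/Z(N)$ is nilpotent of class less than $k-1$, and the standard fact that $N/Z(N)$ of class $c$ forces $N$ of class at most $c+1$ then gives $\gamma_k(N)=1$.

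The remaining assertions of the theorem are essentially formal: by definition $G/N$ injects diagonally into $\prod_{\phi:G\to S_n}S_n=S_n^l$, so $G/N$ lies in the variety generated by $S_n$ and in particular satisfies $x^{L(n)}=1$, since $L(n)$ is the exponent of $S_n$. The main obstacle I foresee is the arithmetic securing $n'\leq n$; this inequality is precisely what allows kernels of $S_n$-homomorphisms to descend to $\bar G$ and permits the induction to close. The other ingredients --- fiber counts for commutator equations and the standard relation between conjugation representations and intersections of centralizers --- are routine.
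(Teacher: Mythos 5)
Your proof is correct and follows essentially the same route as the paper's: you condition on the value of the prefix $w_{k-1}$, show that with probability at least $\epsilon'$ this value has conjugacy class of size at most $n$ and hence centralizes $N=G(S_n)$ via a permutation representation of degree at most $n$, pass to $G/C_G(N)$, apply induction, and finish with $[\gamma_{k-1}(N),N]\le [C_G(N),N]=1$ (your $N/Z(N)$ step). The only divergence is bookkeeping: the paper picks the threshold $\delta=\epsilon/(k+1)$ so that the degree $n$ is literally unchanged at each inductive step, whereas your threshold yields $\epsilon'=(\epsilon n-1)/(n-1)$ and requires the (correctly verified) inequality $n'\le n$ plus the observation that homomorphisms to $S_{n'}$ give homomorphisms to $S_n$.
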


The following is an immediate consequence for residually finite groups
which are not necessarily finitely generated.

\begin{cor}
\label{non-fg}
If $w_k$ is a probabilistic identity of a residually finite
group $\Gamma$, then $\Gamma$ is an extension of a nilpotent group of class
less than $k$ by a group of finite exponent.
\end{cor}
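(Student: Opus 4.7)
The plan is to deduce this from Theorem \ref{finite} by taking a sufficiently universal normal subgroup of $\Gamma$ and then using residual finiteness to transfer a nilpotency statement about all finite quotients back to $\Gamma$ itself.

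Since $w_k$ is a probabilistic identity of $\Gamma$, there is a fixed $\e > 0$ such that $P_{H,w_k}(1) \ge \e$ for every finite quotient $H$ of $\Gamma$. Set $n = \lfloor k/\e \rfloor$; note that $n$ depends only on $k$ and $\e$, not on the quotient $H$. Define
\[
\Delta = \bigcap_{\phi : \Gamma \to S_n} \ker \phi,
\]
the intersection of the kernels of all homomorphisms from $\Gamma$ to $S_n$. Then $\Delta \lhd \Gamma$, and $\Gamma/\Delta$ embeds into a (possibly infinite) direct product of copies of $S_n$. In particular, $\Gamma/\Delta$ has exponent dividing $L(n) = L(\lfloor k/\e \rfloor)$, so it is a group of finite exponent, which gives the outer quotient we want.

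It remains to show that $\Delta$ is nilpotent of class less than $k$, i.e. that $[x_1, \ldots, x_k] = 1$ for all $x_1, \ldots, x_k \in \Delta$. Since $\Gamma$ is residually finite, it suffices to show that the image of $[x_1, \ldots, x_k]$ is trivial in every finite quotient $H = \Gamma/M$. Fix such a quotient with projection $\pi: \Gamma \to H$. For any homomorphism $\psi : H \to S_n$, the composition $\psi \compose \pi$ is a homomorphism $\Gamma \to S_n$, hence it kills $\Delta$ by definition; so $\pi(\Delta) \subseteq \ker \psi$. Ranging over all such $\psi$, we conclude $\pi(\Delta) \subseteq H(S_n) = N_H$ (using the notation from Theorem \ref{finite}). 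By Theorem \ref{finite} applied to $H$, the subgroup $N_H$ is nilpotent of class less than $k$, so $[\pi(x_1), \ldots, \pi(x_k)] = 1$ in $H$. This holds for every finite quotient $H$, and residual finiteness of $\Gamma$ then forces $[x_1, \ldots, x_k] = 1$ in $\Gamma$.

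The only real content is the identification of $\Delta$: the hypothesis gives us a fixed $n$ that works uniformly across all finite quotients, so a single normal subgroup $\Delta$ of $\Gamma$ (defined intrinsically via $\Gamma \to S_n$) simultaneously satisfies both conclusions. The mild subtlety, and the main thing to check, is the step $\pi(\Delta) \subseteq H(S_n)$; once this is in hand, Theorem \ref{finite} together with residual finiteness finishes the argument with no further computation.
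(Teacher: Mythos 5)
Your proof is correct and is precisely the argument the paper has in mind: it states Corollary \ref{non-fg} as an immediate consequence of Theorem \ref{finite}, with $\Delta = \Gamma(S_n)$ for the uniform $n = \lfloor k/\e \rfloor$, the quotient $\Gamma/\Delta$ embedding in a power of $S_n$ (hence of exponent dividing $L(n)$), and nilpotency of $\Delta$ pulled back from the finite quotients via $\pi(\Delta) \subseteq H(S_n)$ and residual finiteness. Your write-up supplies exactly the details the paper omits, including the one genuinely checkable step $\pi(\Delta) \subseteq H(S_n)$, which you verify correctly.
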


In \cite{LS3} the following problem is posed.

\begin{prob}
\label{open1}
Do all finitely generated residually finite groups $\Gamma$ which satisfy
a probabilistic identity $w$ satisfy an identity?
\end{prob}

This seems to be a rather challenging problem.
Till recently the only non-trivial cases where a positive answer was known were
$w = [x_1,x_2]$ and $w = x_1^2$.

In \cite{LS3} an affirmative answer to Problem \ref{open1} is given for all
words $w$, provided the group $\Gamma$ is linear.

In \cite{LS4} it is shown that, if a residually finite group $\Gamma$ (not necessarily
finitely generated) satisfies a probabilistic identity,
then the non-abelian upper composition factors of $\Gamma$ have bounded size.
This leads to solutions of problems from \cite{DPSS} and \cite{B}.

The next result provides a positive answer to Problem \ref{open1} for additional words $w$;
it also deals with groups which are not finitely generated.

\begin{cor} Every residually finite group which satisfies the probabilistic
identity $w_k$ satisfies an identity.
\end{cor}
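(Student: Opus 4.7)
The plan is to deduce the corollary directly from Corollary \ref{non-fg}. That corollary supplies us with a normal subgroup $N \lhd \Gamma$ such that $N$ is nilpotent of class less than $k$ and the quotient $\Gamma/N$ has finite exponent, say $e$. My goal is to exhibit a single nontrivial word in $F_k$ that vanishes on $\Gamma$ under this structural assumption.

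First I would observe that, because $\Gamma/N$ has exponent $e$, every $e$-th power in $\Gamma$ lies in $N$: for any $g \in \Gamma$, the image $gN$ satisfies $(gN)^e = N$, so $g^e \in N$. Next, because $N$ is nilpotent of class less than $k$, any $k$-fold left-normed commutator of elements of $N$ is trivial, i.e.\ $\gamma_k(N) = 1$. Combining these two facts, for arbitrary $g_1, \ldots, g_k \in \Gamma$ the elements $g_1^e, \ldots, g_k^e$ all belong to $N$, and therefore
\[
[g_1^e, g_2^e, \ldots, g_k^e] \in \gamma_k(N) = 1.
\]
Thus $\Gamma$ satisfies the law $[x_1^e, x_2^e, \ldots, x_k^e] = 1$.

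Finally, I would check that $v(x_1,\ldots,x_k) := [x_1^e, x_2^e, \ldots, x_k^e]$ is a nontrivial element of the free group $F_k$ for every positive integer $e$ and $k \ge 1$; this is immediate, e.g.\ by reduction modulo the derived subgroup (its image in the abelianization $\Z^k$, for $k=1$, is $e \neq 0$) and, for $k \ge 2$, by noting that basic commutators of distinct free generators are nontrivial in $F_k$. Hence $v$ is a nontrivial identity satisfied by $\Gamma$, proving the corollary.

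There is essentially no obstacle here: the content of the result is packaged entirely inside Corollary \ref{non-fg}, and the passage from ``nilpotent-by-finite exponent'' to ``satisfies an identity'' is routine via the word $[x_1^e,\ldots,x_k^e]$. If anything, the only place to be slightly careful is in verifying that this word is genuinely nontrivial in $F_k$, and this is handled by the abelianization argument above.
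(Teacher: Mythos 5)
Your proof is correct and is exactly the argument the paper gives: apply Corollary \ref{non-fg} to obtain $N$ nilpotent of class less than $k$ with $\Gamma/N$ of finite exponent $c$, and conclude that $[x_1^c,\ldots,x_k^c]$ is an identity of $\Gamma$. The only addition is your explicit check that this word is nontrivial in $F_k$, which the paper leaves implicit.
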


Indeed, Corollary \ref{non-fg} shows that our group satisfies the identity $[x_1^c, \ldots ,x_k^c]$
for some positive integer $c$.

\begin{defn} A word $1 \ne w \in F_k$ is said to be {\it good} if for any real number $\e > 0$
there exists a word $1 \ne v \in F_m$ (for some $m$) depending only on $w$ and $\e$ such that,
if $G$ is a finite group satisfying $P_{G, w}(g) \ge \e$ for some $g \in G$, then $v$ is an identity
of $G$.
\end{defn}

For example, results from \cite{N}, \cite{M1} and \cite{M2} imply that the words $[x_1,x_2]$ and $x_1^2$
are good. It is easy to see that, if $w$ is a good word, and $v$ is any word disjoint from $w$ (namely,
their sets of variables are disjoint), then the words $wv$ and $vw$ are also good.

Theorem \ref{finite} above shows that the words $[x_1, \ldots , x_k]$ are good for all $k$.
In fact, we can generalize the latter result as follows.

\begin{prop}\label{good} Let $w(x_1, \ldots , x_k)$ be a word, and let $w'(x_1, \ldots, x_{k+1}) =
[w(x_1, \ldots , x_k), x_{k+1}]$. Suppose $w$ is good. Then so is $w'$.
\end{prop}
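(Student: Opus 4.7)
The plan is to relativize the argument behind Theorem~\ref{finite}: use an averaging step to concentrate the mass of $w'$ on tuples whose $w$-value has a small conjugacy class, extract from these a normal subgroup $N$ of $G$ with a permutation-group quotient of bounded exponent, apply goodness of $w$ inside $G/C_G(N)$, and finally assemble an identity of $G$ by exploiting the fact that high enough powers land in $N$.

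First I will apply Markov's inequality to the hypothesis $P_{G,w'}(g') \ge \e$. Writing $p_{\bar a} = \Pr_b\bigl[[w(\bar a),b] = g'\bigr]$, Markov produces a set $T \subseteq G^k$ of density at least $\e/2$ on which $p_{\bar a} \ge \e/2$. For any fixed $h \in G$ the probability $\Pr_b[[h,b]=g']$ is either $0$ or $|C_G(h)|/|G|$, so each $\bar a \in T$ satisfies $|C_G(w(\bar a))| \ge (\e/2)|G|$; equivalently, $w(\bar a)$ has conjugacy class of size at most $n := \lceil 2/\e\rceil$.

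Next I set $N = G(S_n)$. For any element $h$ of conjugacy-class size at most $n$, the conjugation action of $G$ on $h^G$ yields a homomorphism $G \to S_n$ whose kernel is contained in $C_G(h)$; hence $N \subseteq C_G(h)$, and the whole set of such small-class elements lies in $C_G(N)$. Since $C_G(N)$ is normal in $G$, the previous paragraph pushes down to
\[
P_{G/C_G(N),\, w}(1) \ge \e/2.
\]
By construction $G/N$ embeds in a power of $S_n$, so its exponent divides $L(n)$.

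Since $w$ is good, applied with parameter $\e/2$ this yields a nontrivial word $v(x_1,\ldots,x_m)$, depending only on $w$ and $\e$, that is an identity of $G/C_G(N)$; equivalently, $v(G) \subseteq C_G(N)$. Introducing a fresh variable $y$ and using $y^{L(n)} \in N$ for every $y \in G$, I will set
\[
v'(x_1,\ldots,x_m,y) := \bigl[v(x_1,\ldots,x_m),\, y^{L(n)}\bigr].
\]
Since $v$-values lie in $C_G(N)$ and $L(n)$-th powers lie in $N$, these commute, so $v'$ is an identity of $G$; and $v'$ is nontrivial in the free group because $v$ and $y^{L(n)}$ involve disjoint sets of variables and hence cannot be powers of a common element. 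As $v'$ depends only on $w$ and $\e$, this is the required witness of goodness for $w'$.

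The main conceptual step, and the one requiring the most care, is the inclusion of the small-class elements in $C_G(N)$: it is what converts the centralizer information supplied by $w'$ into a homomorphic statement to which the goodness hypothesis on $w$ can be applied. The closing assembly of $v'$ from $v$ and the exponent bound on $G/N$ is then routine.
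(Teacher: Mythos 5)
Your proposal is correct and follows essentially the same route as the paper: an averaging (Markov-type) step showing that with probability at least $\e/2$ the value $w(\bar a)$ has small centralizer index (the paper's Lemma \ref{lem1}), a permutation representation argument placing those values in $C_G(G(S_n))$ so that $P_{G/C_G(N),w}(1)\ge\e/2$ (the paper's Proposition \ref{prop2}), and then the identity $v'=[v,y^{L(n)}]$ built from the word $v$ supplied by the goodness of $w$. The only differences are cosmetic (conjugation action on the class instead of the coset action, fixing $\delta=\e/2$ at the outset rather than carrying a general $\delta$), and your nontriviality argument for $v'$ is sound.
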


Induction on $k$ immediately yields the following.

\begin{cor} The words $[x_1^2,x_2, \ldots, x_k]$ $(k \ge 2)$ are good.
\end{cor}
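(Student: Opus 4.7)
The plan is to mimic the centralizer argument underlying Theorem~\ref{finite}, but with the single variable $x_1$ replaced by the arbitrary good word $w(x_1,\ldots,x_k)$. Fix $\e' > 0$ and suppose $G$ is a finite group with $P_{G,w'}(g') \ge \e'$ for some $g' \in G$. Conditioning on $a := w(x_1,\ldots,x_k)$ and using that $\{y \in G : [a,y] = g'\}$ is either empty or a single coset of $C_G(a)$, I would write
\[
\e' \;\le\; P_{G,w'}(g') \;=\; \sum_{a \in G} P_{G,w}(a)\, f(a), \qquad f(a) := \Pr_{y}\bigl[[a,y] = g'\bigr] \in \{0\}\cup\{|C_G(a)|/|G|\}.
\]
Splitting this sum according to whether $|C_G(a)|/|G| \ge \e'/2$ shows that the set $B := \{a \in G : |C_G(a)| \ge (\e'/2)|G|\}$ has $P_{G,w}$-measure at least $\e'/2$.

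Next I would package $B$ into a single normal subgroup whose quotient inherits a probabilistic identity for $w$. Each $a \in B$ has conjugacy class of size at most $n := \lfloor 2/\e' \rfloor$, so the conjugation action of $G$ on $a^G$ yields a homomorphism $G \to \mathrm{Sym}(a^G)$ which embeds in $S_n$; by definition $N_0 := G(S_n)$ lies in the kernel of every such map, so $N_0$ centralizes each $a \in B$. Therefore $B \subseteq Z := C_G(N_0)$. Since $N_0$ is normal in $G$, so is $Z$, and
\[
P_{G/Z,\,w}(1) \;=\; P_{G,w}(Z) \;\ge\; P_{G,w}(B) \;\ge\; \e'/2.
\]

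Finally I would apply the goodness hypothesis on $w$, with parameter $\e'/2$, to the quotient $G/Z$: there is a nontrivial word $v(x_1,\ldots,x_m)$, depending only on $w$ and $\e'$, which is an identity of $G/Z$, so $v(g_1,\ldots,g_m) \in Z$ for all $g_i \in G$. Since $G/N_0$ embeds in a power of $S_n$, it has exponent dividing $L(n)$, whence $h^{L(n)} \in N_0$ for every $h \in G$. As $Z$ centralizes $N_0$, the word
\[
v'(x_1,\ldots,x_{m+1}) \;:=\; \bigl[\,v(x_1,\ldots,x_m)^{L(n)},\; x_{m+1}^{L(n)}\,\bigr]
\]
is then an identity of $G$; and a standard free-group argument (centralizers in free groups are cyclic, and $\langle x_{m+1}\rangle \cap F_m = 1$) shows $v' \ne 1$ in $F_{m+1}$. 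Since $v'$ depends only on $w$ and $\e'$, this verifies that $w'$ is good.

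I expect the main obstacle to be the middle step: forcing the measure-heavy set $B$ into a single conjugation-invariant normal subgroup $Z$ that one can quotient by in order to trigger the goodness of $w$. The crucial observation is that $G(S_n)$ simultaneously absorbs the kernels of the conjugation actions on \emph{all} conjugacy classes of size at most $n$; once this packaging is in place, the remainder is a formal combination of the goodness hypothesis on $G/Z$ with the bounded exponent of $G/N_0$.
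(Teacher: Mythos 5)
Your argument is correct as far as it goes, but what it proves is the inductive step --- that is, Proposition \ref{good}: if $w$ is good then so is $[w,x_{k+1}]$ --- and it does so by essentially the paper's own route. Your first-moment estimate showing that the set $B$ of values with $|C_G(a)| \ge (\e'/2)|G|$ has $P_{G,w}$-measure at least $\e'/2$ is Lemma \ref{lem1} with $\delta=\e'/2$; packaging $B$ into $Z=C_G(G(S_n))$ via permutation representations of degree at most $n$ is Proposition \ref{prop2} (you act by conjugation on the class $a^G$, the paper acts on the cosets of $C_G(a)$; both have kernel inside $C_G(a)$ and degree $|G:C_G(a)|\le n$, so this is purely cosmetic); and your final identity $[v^{L(n)},x_{m+1}^{L(n)}]$ is the paper's $[v,x_{m+1}^{L(n)}]$ with a harmless extra power, since $v(G)\le Z$ already.

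The gap is that the Corollary concerns the specific words $[x_1^2,x_2,\ldots,x_k]$, and your proposal never starts the induction: nowhere do you establish, or even cite, that the base word $x_1^2$ is good, and your centralizer argument cannot supply this, because $x_1^2$ is not of the form $[w,y]$. This base case is a genuine, non-formal input: it rests on Mann's results \cite{M1}, \cite{M2} --- for instance, Proposition 5 of \cite{M2} gives $P_{G,[x_1,x_2]}(1)\ge \e^2$ whenever $P_{G,x_1^2}(g)\ge\e$, after which Neumann's theorem \cite{N} (or Theorem \ref{finite} with $k=2$) produces an identity of $G$ depending only on $\e$. With that base case in hand, an explicit induction on $k$ using your step yields the Corollary; as written, your proof establishes Proposition \ref{good} but not the stated Corollary.
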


In fact we also obtain a related structure theorem as follows.

\begin{thm}\label{structure}

(i) Let $G$ be a finite group, let $w = [x_1^2,x_2, \ldots, x_k]$, and
suppose $P_{G,w}(g) \ge \e > 0$ for some $g \in G$. Then there exists $n = n(k,\e)$ depending only
on $k$ and $\e$, such that $G(S_n)$ is nilpotent of class at most $k$.

(ii) If $w$ above is a probabilistic identity of a finitely generated residually finite group
$\Gamma$, then $\Gamma$ has a finite index subgroup which is nilpotent of class at most $k$.
\end{thm}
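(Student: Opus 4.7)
Part (ii) follows from (i) by the standard profinite completion argument: if $\Gamma$ is $d$-generated and $w$ is a probabilistic identity with constant $\epsilon$, then every finite quotient $H$ of $\Gamma$ satisfies $P_{H,w}(1) \ge \epsilon$, so by (i) there is a uniform $n = n(k,\epsilon)$ with $H(S_n)$ nilpotent of class $\le k$. Since $H$ is $d$-generated, $[H:H(S_n)] \le (n!)^{(n!)^d}$, uniformly in $H$; passing to the inverse limit yields a closed finite-index subgroup of $\widehat{\Gamma}$ topologically nilpotent of class $\le k$, whose intersection with $\Gamma$ is $\Delta$.

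For part (i), I argue by induction on $k \ge 2$, based on the following \emph{key observation}: if $u \in G$ has $|u^G| \le m$, then the conjugation action of $G$ on $u^G$ yields a homomorphism $G \to S_m$ whose kernel is contained in $C_G(u)$; hence $G(S_m) \subseteq C_G(u)$, equivalently $u \in C_G(G(S_m))$.

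\emph{Inductive step.} Assume (i) holds for $k-1$, and write $w = [u, x_k]$ with $u = [x_1^2, x_2, \ldots, x_{k-1}]$. From $P_{G,w}(g) \ge \epsilon$ and Fubini over $x_k$, at least an $\epsilon/2$-fraction of tuples $(x_1, \ldots, x_{k-1})$ produce $u$ admitting $\ge (\epsilon/2)|G|$ solutions $x_k$ to $[u,x_k]=g$; these form a coset of $C_G(u)$, so $|u^G| \le m := \lceil 2/\epsilon \rceil$. Setting $C := C_G(G(S_m))$, which is normal in $G$ since $G(S_m) \lhd G$, the key observation places $u \in C$ for this fraction of tuples, giving $P_{G/C,\, u}(1) \ge \epsilon/2$. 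Applying the inductive hypothesis to $G/C$ and the word $u = [x_1^2, x_2, \ldots, x_{k-1}]$ provides $n_2 = n_2(k-1, \epsilon/2)$ with $(G/C)(S_{n_2})$ nilpotent of class $\le k-1$. Let $M$ be the preimage of $(G/C)(S_{n_2})$ in $G$, so $G/M \hookrightarrow S_{n_2}^l$ for some $l$, and hence $G(S_{n_2}) \subseteq M$. Setting $n := \max(m,n_2)$, we have $G(S_n) \subseteq M \cap G(S_m)$; since $M/C$ has class $\le k-1$, $\gamma_k(G(S_n)) \subseteq \gamma_k(M) \subseteq C$, and combining with $[C,G(S_m)]=1$ and $G(S_n) \subseteq G(S_m)$ yields
\[
\gamma_{k+1}(G(S_n)) = [\gamma_k(G(S_n)), G(S_n)] \subseteq [C, G(S_m)] = 1,
\]
so $G(S_n)$ is nilpotent of class $\le k$.

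\emph{Base case $k=2$ and main obstacle.} The same scheme handles $k=2$, with the inductive hypothesis replaced by Mann's theorem \cite{M1}: $P_{G/C,\, x_1^2}(1) \ge \epsilon/2$ forces $G/C$ to have a normal subgroup of bounded index which is abelian of exponent $2$; consequently $(G/C)(S_{n_2})$ is abelian (class $\le 1$) for suitable $n_2$, and the final commutator computation is unchanged. The main technical point is the key observation combined with the verification that the resulting nilpotency class is exactly $k$ rather than $k-1$: this ``$+1$'' shift relative to Theorem \ref{finite} comes from the passage through the centralizer subgroup $C$ and is essential to the tightness of the bound (e.g., $Q_8$ shows the class-$2$ bound for $k=2$ cannot be improved). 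The delicate bookkeeping lies in choosing $n$ so that $G(S_n) \subseteq G(S_m) \cap M$ simultaneously, which activates both the $\gamma_k$-inclusion from $M/C$ and the final trivialization from $[C, G(S_m)] = 1$.
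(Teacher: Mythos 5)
Your inductive step and your treatment of part (ii) are essentially the paper's own argument: your ``key observation'' plus the Fubini/counting step is exactly Lemma \ref{lem1} and Proposition \ref{prop2} (with $\delta=\epsilon/2$), and the bookkeeping with $C=C_G(G(S_m))$, the preimage $M$, monotonicity of $G(S_n)$ in $n$, and the final inclusion $[\gamma_k(G(S_n)),G(S_n)]\le [C,G(S_m)]=1$ is the same computation the paper performs. That part is fine.

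The genuine gap is in your base case. You invoke Mann \cite{M1} in the form ``$P_{H,x_1^2}(1)\ge\epsilon/2$ forces a normal subgroup of bounded index which is abelian of exponent $2$,'' and this statement is false: an extraspecial $2$-group of order $2^{1+2n}$ has roughly half of its elements satisfying $x^2=1$, yet its largest abelian subgroup has index $2^n$, so no bound on the index of an abelian (let alone exponent-$2$) subgroup can depend only on $\epsilon$. The correct form of \cite{M1} gives only a bounded-index normal subgroup that is nilpotent of class at most $2$, and feeding that into your scheme yields $(G/C)(S_{n_2})$ of class $\le 2$, hence $\gamma_4(G(S_n))=1$ at $k=2$, i.e.\ class $\le k+1$ throughout the induction --- you lose the stated bound of class $\le k$. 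What you actually need as the bottom of the induction is the $k=1$ statement: $P_{H,x_1^2}(g)\ge\epsilon'$ implies $H(S_n)$ is \emph{abelian} for some $n=n(\epsilon')$. This is true, but it is not a consequence of \cite{M1}; the paper obtains it from Proposition 5 of \cite{M2} (many square roots of a fixed element $g$ force $P_{H,[x_1,x_2]}(1)\ge\epsilon'^2$) combined with Theorem \ref{finite} for $k=2$. Starting your induction at $k=1$ with that input repairs the argument and recovers the class-$\le k$ bound; note also that this base case must handle $P_{H,x_1^2}(g)\ge\epsilon'$ for an arbitrary $g$, not just $g=1$, since that is the form in which statement (i) is quoted at the next level (though in your reduction you do land on $g=1$, the extra generality costs nothing via \cite{M2}).
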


\bigskip

\section{Proofs}

In this section we prove Proposition \ref{good}, Theorem \ref{finite}, Theorem \ref{main}
and Theorem \ref{structure}, which in turn imply the other results stated in the Introduction.

\begin{lem}\label{lem1} Let $w(x_1, \ldots , x_k)$ be a word, and let $w'(x_1, \ldots, x_{k+1}) =
[w(x_1, \ldots , x_k), x_{k+1}]$. Let $G$ be a finite group, and suppose $P_{G,w'}(g) \ge \e > 0$
for some $g \in G$.  Choose $g_1, \ldots , g_k \in G$ uniformly and independently.
Then, for every $0 < \delta < \e$,
\[
Prob(|G:C_G(w(g_1, \ldots , g_k))| < 1/\delta) > \e-\delta.
\]
\end{lem}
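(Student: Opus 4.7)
The plan is to reduce the statement to a ``reverse Markov'' estimate for the random variable
\[
X \;=\; \frac{1}{|G:C_G(y)|}, \qquad y := w(g_1,\ldots,g_k),
\]
after translating the hypothesis $P_{G,w'}(g)\ge\e$ into the lower bound $\mathbb{E}[X]\ge\e$, with the expectation taken over uniform independent $g_1,\ldots,g_k\in G$.

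\emph{First step: passing from $w'$ to $X$.} Fixing $g_1,\ldots,g_k$ and setting $y = w(g_1,\ldots,g_k)$, the equation $[y,x_{k+1}]=g$ rewrites as $y^{x_{k+1}}=yg$. Its solution set in $x_{k+1}\in G$ is either empty (if $yg$ is not conjugate to $y$) or a single right coset of $C_G(y)$, and so has at most $|C_G(y)|$ elements. Averaging also over $g_1,\ldots,g_k$ and using the hypothesis gives
\[
\e \;\le\; P_{G,w'}(g) \;\le\; \mathbb{E}\!\left[\frac{|C_G(y)|}{|G|}\right] \;=\; \mathbb{E}[X].
\]

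\emph{Second step: a Markov-type split.} Since $X\in(0,1]$, setting $p := \mathrm{Prob}(X>\delta)$ and splitting the expectation on the events $\{X>\delta\}$ and $\{X\le\delta\}$ yields
\[
\e \;\le\; \mathbb{E}[X] \;\le\; p\cdot 1 + (1-p)\cdot\delta,
\]
so $p \ge (\e-\delta)/(1-\delta)$. For $0<\delta<\e$ this is strictly larger than $\e-\delta$, and the event $\{X>\delta\}$ is precisely $\{|G:C_G(y)|<1/\delta\}$, which is the claim.

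\emph{Main obstacle.} There is essentially none: the only point worth checking is the strict inequality in the conclusion, which uses the bound $X\le 1$ (equivalently, $|G:C_G(y)|\ge 1$) together with $\delta>0$, forcing $(\e-\delta)/(1-\delta)>\e-\delta$. The rest is bookkeeping.
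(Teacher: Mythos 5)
Your proposal is correct and is essentially the paper's own argument: bounding the fibre of $x_{k+1}\mapsto[y,x_{k+1}]$ by $|C_G(y)|$, then splitting according to whether $|C_G(y)|>\delta|G|$, merely recast in the language of expectations and a Markov-type estimate rather than a counting sum. No gaps.
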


\begin{proof}

Choose $g_{k+1} \in G$ also uniformly and independently. Then
\[
Prob([w(g_1, \ldots , g_k),g_{k+1}] = g) = P_{G,w'}(g) \ge \e.
\]
Given $g_1, \ldots , g_k, g \in G$, the number of elements $g_{k+1} \in G$ satisfying
$[w(g_1, \ldots , g_k),g_{k+1}] = g$ is at most $|C_G(w(g_1, \ldots , g_k))|$.
This yields
\[
\e |G|^{k+1} \le \sum_{g_1, \ldots , g_k \in G} |C_G(w(g_1, \ldots , g_k))|.
\]
Let
\[
p = Prob(|C_G(w(g_1, \ldots , g_k))| > \delta |G|) = Prob(|G:C_G(w(g_1, \ldots , g_k))| < 1/\delta).
\]
Then we obtain
\[
\e \le p + (1-p)\delta,
\]
so $p \ge (\e-\delta)/(1-\delta) > \e - \delta$, as required.

\end{proof}

\begin{prop}\label{prop2} Let $G, k, w, w', \e$ be as in Lemma \ref{lem1}. Suppose $P_{G,w'}(g) \ge \e$
for some $g \in G$ and
let $0 < \delta < \e$. Let $N = G(S_n)$, where $n = \lfloor 1/\delta \rfloor$, and set
$M = C_G(N)$. Then $P_{G/M,w}(1) > \e - \delta$.
\end{prop}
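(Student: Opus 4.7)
The plan is to chain Lemma \ref{lem1} together with the defining property of $N = G(S_n)$ via the permutation action of $G$ on a small coset space.

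First, I will apply Lemma \ref{lem1} with the given $\delta$. This produces
\[
\Pr\bigl(|G : C_G(h)| < 1/\delta\bigr) > \e - \delta,
\]
where $g_1,\ldots,g_k \in G$ are independent and uniform and $h := w(g_1,\ldots,g_k)$. Since $|G : C_G(h)|$ is an integer and $n = \lfloor 1/\delta \rfloor$, this event coincides with $\{|G:C_G(h)| \le n\}$.

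Second, I will show that on this event one has $h \in M = C_G(N)$. Whenever $|G:C_G(h)| \le n$, the left translation action of $G$ on the coset space $G/C_G(h)$ (of size at most $n$) gives, after an arbitrary labeling of cosets, a homomorphism $\phi_h : G \to S_n$. By the definition
\[
N = G(S_n) = \bigcap_{\phi : G \to S_n} \ker\phi,
\]
we have $N \subseteq \ker\phi_h$. But any $x \in \ker\phi_h$ fixes every coset $gC_G(h)$, i.e. $g^{-1}xg \in C_G(h)$ for all $g \in G$; taking $g=1$ shows $x \in C_G(h)$. Hence $N \subseteq C_G(h)$, equivalently $h \in C_G(N) = M$.

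Finally, I push down to $\bar G = G/M$. Because reduction modulo $M$ is a surjective group homomorphism whose fibres all have size $|M|$, pushing forward the uniform measure on $G^k$ along the coordinatewise projection yields the uniform measure on $\bar G^k$. Therefore
\[
P_{\bar G,w}(1) \;=\; \Pr\bigl(w(g_1,\ldots,g_k) \in M\bigr) \;\ge\; \Pr\bigl(|G:C_G(h)| \le n\bigr) \;>\; \e - \delta,
\]
as required. I do not foresee a genuine obstacle; the only point worth flagging is that the choice $N = G(S_n)$ — rather than the kernel of a single homomorphism — is exactly what is forced by Lemma \ref{lem1}, since the bound on $|G:C_G(h)|$ may vary with the sample $(g_1,\ldots,g_k)$ and can be as large as $n$, so a priori we only get homomorphisms into $S_n$ of varying size.
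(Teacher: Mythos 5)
Your proof is correct and follows essentially the same route as the paper: apply Lemma \ref{lem1}, use the permutation representation of $G$ on the cosets of $C_G(w(g_1,\ldots,g_k))$ to obtain a homomorphism $\phi:G\to S_n$ with $N=G(S_n)\le \ker\phi\le C_G(w(g_1,\ldots,g_k))$, hence $w(g_1,\ldots,g_k)\in M=C_G(N)$ on that event, and then pass to $G/M$. One trivial nitpick: when $1/\delta$ is an integer the events $\{|G:C_G(h)|<1/\delta\}$ and $\{|G:C_G(h)|\le n\}$ need not coincide, but the inclusion you actually use does hold, so the argument is unaffected.
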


\begin{proof}
Using Lemma \ref{lem1} we obtain
\[
Prob(|G:C_G(w(g_1, \ldots , g_k))| < 1/\delta) > \e - \delta.
\]
Clearly, if $|G:C_G(w(g_1, \ldots , g_k))| < 1/\delta$ then the permutation representation
of $G$ on the cosets of $C_G(w(g_1, \ldots , g_k))$ gives rise to a homomorphism
$\phi: G \to S_n$ satisfying $G(S_n) \le \ker(\phi) \le C_G(w(g_1, \ldots , g_k))$.
This implies that
\[
Prob(N \le C_G(w(g_1, \ldots , g_k)) > \e - \delta.
\]
Since $M = C_G(N)$ we obtain
\[
Prob(w(g_1, \ldots , g_k) \in M) > \e - \delta,
\]
so $P_{G/M,w}(1) > \e - \delta$, as required.
\end{proof}

\medskip

We now prove Proposition \ref{good}.

\begin{proof} Recall that $w \in F_k$ is a good word and $w' = [w,x_{k+1}]$.

To show that $w'$ is good, suppose $P_{G,w'}(g) \ge \e > 0$ for some $g \in G$.
Set $\delta = \e/2$, $n = \lfloor 1/\delta \rfloor = \lfloor 2/\e \rfloor$ and apply Proposition \ref{prop2}.
We obtain $P_{G/M,w}(1) > \e/2$, where $M = C_G(G(S_n))$. Since $w$ is good there is
a word $1 \ne v \in F_m$ depending on $w$ and $\e$ such that $v(G) \le M$.

Since $g^{L(n)} \in G(S_n)$ for all $g \in G$, it follows that $v' = [v,x_{m+1}^{L(n)}]$
is an identity of $G$, which depends on $w'$ and $\e$. Therefore $w'$ is good.

\end{proof}

Since $x_1$ is a good word, it now follows from Proposition \ref{good} by induction on $k$
that all commutator words $[x_1, \ldots , x_k]$ are good. We can now also prove the
more refined Theorem \ref{finite}.

\begin{proof}

We prove, by induction on $k \ge 1$, that, under the assumptions of the theorem,
for $n = \lfloor k/\e \rfloor$ and $N = G(S_n)$, we have $\gamma_k(N) = 1$.
The other statements of the theorem follow immediately.

If $k=1$ then $|G|^{-1} = P_{G,x_1}(g) \ge \e$ for some $g \in G$. This yields $|G| \le 1/\e$.
Let $n = \lfloor 1/\e \rfloor$. Then $N = G(S_n) = 1$, which yields the induction base.

Now, suppose the theorem holds for $k$ and we prove it for $k+1$.
We assume $P_{G,w_{k+1}}(g) \ge \e$ and let $n = \lfloor (k+1)/\e \rfloor$, $N = G(S_n)$
and $M = C_G(N)$.

Using Proposition \ref{prop2} with $w= w_k, w' = w_{k+1}$ and
$\delta = \e / (k+1)$ we obtain
\[
P_{G/M,w_k}(1) >  k\e/(k+1).
\]
By induction hypothesis this implies that $(G/M)(S_{\lfloor k/(k\e/(k+1)) \rfloor})$ is
nilpotent of class less than $k$. Since $k/((k\e/(k+1)) = (k+1)/\e$ we see that $(G/M)(S_n)$
is nilpotent of class less than $k$.
Clearly $(G/M)(S_n) \ge G(S_n)M/M$, and this yields
\[
\gamma_k(G(S_n)) \le M.
\]
Therefore
\[
\gamma_{k+1}(N) = [\gamma_k(N),N] \le [M,N]=1.
\]
This completes the proof.

\end{proof}

We can now prove Theorem \ref{main}. This result follows easily from Corollary \ref{non-fg}
using Zelmanov's solution to the Restricted Burnside Problem, which, for
general exponents, also relies on the Classification of finite simple groups.
However, we are able to provide an elementary proof of Theorem \ref{main} which avoids these
very deep results.

\begin{proof}

It suffices to show that, if $G$ is a $d$-generated finite group satisfying
$P_{G,w_k}(1) \ge \e$, then $G$ has a normal subgroup $N$ which is nilpotent
of class less than $k$, such that $|G/N|$ is bounded above in terms of $d, k , \e$
only.

Using Theorem \ref{finite} and its notation, $N = G(S_n)$ is nilpotent of class less than $k$.
We also have $G/N \le S_n^l$ for some $l$. Thus $G/N$ is a $d$-generated group lying in
the variety generated by $S_n$.

A classical result of B.H. Neumann \cite[14.3]{Ne} states that, for every finite group $H$ and a
positive integer $d$, the free $d$-generated group in the variety generated by $H$ is finite of
order dividing $|H|^{|H|^d}$.
This implies that
\[
|G/N| \le n!^{n!^d}.
\]
Since $n = \lfloor k/\e \rfloor$, $|G/N|$ is bounded above in terms of $d, k$ and $\e$.

This completes the proof of Theorem \ref{main}.

\end{proof}

Note that the conclusion of Theorem \ref{main} holds under the weaker assumption that,
for every finite quotient $H$ of $\Gamma$ there exists an element $h \in H$ such that
$P_{H,w_k}(h) \ge \e > 0$. Indeed this follows from Theorem \ref{finite} as above,
replacing $P_{G,w_k}(1)$ by $P_{G,w_k}(g)$.

Finally, we prove Theorem \ref{structure}.

\begin{proof}
We prove part (i) of the theorem by induction on $k$, starting with $k=1$
and $w = x_1^2$.

By Proposition 5 of \cite{M2}, if $P_{G,x_1^2}(g) \ge \e > 0$ then $P_{G, [x_1,x_2]}(1) \ge \e^2$.

By Theorem \ref{finite} this implies that $G(S_n)$ is abelian, where $n = \lfloor 2/\e^2 \rfloor$.

Now suppose the result holds for $k \ge 1$ and let us prove it for $k+1$.
Set $w = [x_1^2,x_2, \ldots, x_k]$ and $w' = [w,x_{k+1}]$.

We assume $P_{G,w'}(g) \ge \e > 0$. Choose $0 < \delta < \e$ (depending on $\e$), and set
$m = \lfloor 1/\delta \rfloor$
and $M = C_G(G(S_m))$. Then, by Proposition \ref{prop2} we have $P_{G/M,w}(1) > \e - \delta$.
By induction hypothesis there exists $n = n(k, \e-\delta)$ such that $(G/M)(S_n)$ is nilpotent of class
at most $k$. This yields
\[
\gamma_{k+1}(G(S_n)) \le M.
\]
Therefore
\[
[\gamma_{k+1}(G(S_n)),G(S_m)] \le [M,N] = 1.
\]
Define $n(k+1, \e) = \max (n(k, \e-\delta), \lfloor 1/\delta \rfloor)$. Then it follows that,
for $n'= n(k+1, \e) = \max (n,m)$ we have
\[
\gamma_{k+2}(G(S_{n'})) = 1,
\]
proving part (i).

Part (ii) follows from part (i) as in the proof of Theorem \ref{main}.

\end{proof}

By choosing $\delta$ to be a suitable function of $\e$ (so that $m=n$ at each inductive step) one may
obtain explicit good bounds on $n(k, \e)$. We leave this for the interested reader.

\bigskip

\section{Related problems}

We conclude with some natural questions and directions for further research.

\begin{prob}\label{open2} Characterize residually finite groups in which $[x_1, \ldots , x_k]$
is a probabilistic identity.
\end{prob}

In particular, are these groups virtually nilpotent of class less than $k$?

The answer is positive for $k=2$. Indeed, a result of L{\'e}vai and Pyber \cite[1.1(iii)]{LP}
shows that a profinite group $G$ in which $[x_1,x_2]$ is a probabilistic identity
has an open abelian subgroup, whose finite index need not be bounded in terms of
$P_{G,[x_1,x_2]}(1)$. This implies a similar result for residually finite groups.

By Theorem \ref{main}, the answer to the question above is positive for all $k$ if the ambient group $\Gamma$
(or its profinite completion) is finitely generated. In the general case it follows that, for some $n$, $\Gamma(S_n)$
(which may have infinite index in $\Gamma$) is nilpotent of class less than $k$.
But the reverse implication does not hold, as the product of infinitely many copies of
$S_n$ demonstrates.

\begin{prob}\label{open3} Are all non-identity words good?
\end{prob}

This does not seem likely (or provable), so it would be nice to find an
example of a non-identity word which is not good.

\begin{prob}\label{open4} Characterize the good words, or at least find more examples of them.
\end{prob}

This is particularly interesting for some specific words.

\begin{prob}\label{open5} Are power words $x_1^k$ good?
\end{prob}

Let us say that general commutator words are words constructed from the variables $x_k$ ($k \ge 1$)
in finitely many steps in which we pass from previously constructed words $w_1, w_2$ in disjoint sets of
variables to the word $[w_1,w_2]$.

For examples, let $\delta_1(x_1) = x_1$ and
\[
\delta_{k+1}(x_1, \ldots , x_{2^k}) = [\delta_k(x_1, \ldots , x_{2^{k-1}}), \delta_k(x_{2^{k-1}+1}, \ldots , x_{2^k})].
\]
Thus $G$ satisfies the identity $\delta_{k+1}$ if and only if it is solvable of derived
length at most $k$.

\begin{prob}\label{open6} Are general commutator words good? Are the words $\delta_k$ good?
\end{prob}

A positive answer would of course follow from a positive answer to the following.

\begin{prob}\label{open7} Suppose $w_1, w_2$ are good words in disjoint sets of variables.
Does it follow that the word $[w_1,w_2]$ is good?
\end{prob}

It would be nice to find analogues of Theorem \ref{main} where we
replace nilpotency by solvability.

\begin{prob}\label{open8} Let $\Gamma$ be a finitely generated residually finite group and suppose
the word $\delta_k$ is a probabilistic identity of $\Gamma$. Does it follow that
$\Gamma$ has a solvable subgroup $\Delta$ of finite index? Can we further require
that the derived length of $\Delta$ is less than $k$?
\end{prob}

Finally, for a finite group $G$, set
\[
Pr_k(G) = P_{G,w_k}(1),
\]
the probability that $[g_1, \ldots , g_k] = 1$ in $G$.
Note that $Pr_2(G)$, denoted in the literature by $Pr(G)$ and $cp(G)$, was widely studied
by various authors. It was shown in \cite{G} that the maximal value of $Pr(G)$ for $G$ non-abelian
is $5/8$.

\begin{prob} Given $k \ge 3$, find the maximal value of $Pr_k(G)$ for finite groups
$G$ satisfying $\gamma_k(G) \ne 1$.
\end{prob}

The set $\{ Pr(G): G \; {\rm a \; finite \; group} \}$ also received considerable attention;
see \cite{J}, \cite{H} and \cite{E}. In the latter paper, Eberhard shows that this set is well ordered by $>$
and that its limit points are all rational.

\begin{prob} For $k \ge 3$, study the set $\{ Pr_k(G): G \; {\rm a \; finite \; group} \}$.
Are its limit points all rational? Is it well ordered by $>$?
\end{prob}

\bigskip

\end{document}